\newtheorem{theorem}{Theorem}[section]
\newtheorem{lemma}[theorem]{Lemma}
\newtheorem{proposition}[theorem]{Proposition}
\theoremstyle{remark}
\email{m. karras@univ-dbkm.dz}
\email{ling.li.China@hotmail.com}
\email{Joshua.Stucky@uga.edu}
\keywords{ integer part, exponent pairs, three-dimensional exponential sums}
\subjclass[2020]{11A25, 11L07, 11N37}
\begin{document}
\title{Hyperbolic Summation for Fractional Sums}
\author{Meselem KARRAS, Ling LI, and Joshua STUCKY }
\address{ Meselem Karras, Department of Mathematics, University of Djilali
	Bounaama, FIMA Laboratory, Khemis Miliana, Algeria.}
\address{Li Ling, School of Mathematics and Statistics, Qingdao University,
	308 Ningxia Road, Shinan District, Qingdao, Shandong, China.}
\address{Joshua STUCKY, University of Georgia, UGA, Department of Mathematics, United States}
\begin{abstract}
Let $f(n)$ be an arithmetic function with $f(n) \ll n^\alpha$ for some $\alpha\in[0,1)$ and let $\lfloor .\rfloor $ denote the integer part function. In this paper, we evaluate asymptotically the sums
\begin{equation*}
\sum_{n_{1}n_{2}\leq x}f \left( \left\lfloor \frac{x}{n_{1}n_{2}}%
\right\rfloor \right),
\end{equation*}
we use the estimation of three-dimensional exponential sums due to Robert and Sargos.
\end{abstract}

\maketitle

\section{Introduction}

For an integer $n\geq 1$, let $\tau(n)$ be the number of positive divisors of $n$. A classical result of Dirichlet states that
\begin{align}  \label{eq: asymptotic formula for the sum of tau(n)}
\sum_{n\leq x}\tau(n)=x(\log{x}+2\gamma-1)+\Delta(x)
\end{align}
where is $\gamma$ is the Euler's constant and $\Delta(x)$ is an error term. Dirichlet proved that $\Delta(x) \ll x^{1/2}$. Improved estimated for $\Delta(x)$ have been given by numerous authors (\cite{Voronoi,Van der Corput1,Van der Corput2,Kolesnik,Iwaniec}), with the current record being Huxley's \cite{Huxley2} estimate: for any $\ep > 0$,
\begin{equation} \label{eq: Bourgain and Watt's result}
\Delta(x)\ll x^{\frac{131}{416}+\ep}.
\end{equation}

By the definition of $\tau(n)$, we note that 
\begin{equation*}
\sum_{n\leq x}\tau(n)=\sum\limits_{kl\leq x}1=\sum_{k\leq x}\sum_{l\leq 
\frac{x}{k}}1=\sum_{n\leq x}\left\lfloor\frac{x}{n}\right\rfloor.
\end{equation*}
Thus, we can consider \eqref{eq: asymptotic formula for the sum of tau(n)}
as an asymptotic formula for the fractional sum $\sum_{n\leq x}\left \lfloor
x/n\right\rfloor$, where $\lfloor t\rfloor$ is the largest integer not
exceeding $t$. With this viewpoint, Bordell\`{e}s, Dai, Heyman, Pan and
Shparlinski \cite{BDHPS} investigated the more general sums
\[
S_f(x):=\sum_{n\leq x}f\left(\left\lfloor\frac{x}{n}\right\rfloor\right)
\]
for arithmetic functions $f$ satisfying various growth conditions. Later, Wu \cite{W} and Zhai \cite{Z} independently showed that if 
\begin{equation*}
f(n)\ll n^{\alpha}\left(\log{n}\right)^{\theta}
\end{equation*}
for some $\alpha\in[0,1)$ and $\theta\geq 0$, then

\begin{align}  \label{eq: S_f(x) asymptotic formula of Zhai and Wu}
S_{f}(x)=x\sum_{n=1}^{\infty}\frac{f(n)}{n(n+1)}+O\left(x^{(1+\alpha)/2}%
\left(\log{x}\right)^{\theta}\right).
\end{align}

It is possible to improve \eqref{eq: S_f(x) asymptotic formula of Zhai and Wu} for functions $f$ satisfying certain decomposition identities. For example, if $f=\Lambda$, the von Mangoldt function, Ma and Wu \cite{MW} proved that 
\begin{align*}
\sum_{n\leq x}\Lambda\left(\left\lfloor\frac{x}{n}\right\rfloor\right)=x%
\sum_{n=1}^{\infty}\frac{\Lambda(n)}{n(n+1)}+O\left(x^{\frac{35}{71}%
+\varepsilon}\right)
\end{align*}
for any $\varepsilon>0$. Here the ``decomposition identity'' needed is Vaughan's identity. The exponent $35/71$ was then improved to $9/19$ by Liu, Wu and Yang \cite{LWZ}. If $f=\tau$, Ma and Sun \cite{MS} showed that
\[
S_{\tau}(x)=\sum_{n\leq x}\tau\left(\left\lfloor\frac{x}{n}%
\right\rfloor\right)=x\sum_{n=1}^{\infty}\frac{\tau(n)}{n(n+1)}+O\left(x^{\frac{11}{23}+\varepsilon}\right).
\]
The exponent $11/23$ was then improved to $19/40$ and $9/19$, respectively,
by Bordell\`{e}s \cite{Bordell} and Liu, Wu and Yang \cite{LWZ2}, and finally to $5/11$ by the third author \cite{Stucky}. 

Motivated by recent results, it is important to study sums of the form 
\begin{equation*}
T_{f,r}(x)=\sum_{n_1n_2\cdots n_r\leq x}f\left(\left\lfloor\frac{x}{n_1n_2\cdots
n_r}\right\rfloor\right) = \sum_{n\leq x} f\pth{\floor{\frac{x}{n}}}\tau_r(n),
\end{equation*}
where $r$ is a fixed positive integer and $\tau_r$ is the number of ways of writing $n$ as a product of $r$ positive integers. In this paper, we consider the case $r=2$ and put
\begin{equation*}
T_f(x) = T_{f,2}(x) = \sum_{n\leq x} f\pth{\floor{\frac{x}{n}}}\tau(n)
\end{equation*}
Supposing that $\Delta(x) \ll x^{\theta+\ep}$ and that $f(n) \ll n^\alpha$ for some $\alpha\in[0,1)$, it is not hard to show (see Section \ref{sec: Proof of (1.4)}) that
\begin{equation}\label{eq:Asymptotic}
T_f(x)=C_1(f)x\log {x}+C_2(f) x+O\pth{x^{\frac{\alpha(1-\theta)+1}{2-\theta}+\ep}},
\end{equation}
where 
\[
C_1(f)=\sum_{d\geq 1}\frac{f(d)}{d(d+1)}, \qquad C_2(f) = C_1(2\gamma-1) - C_3(f),
\]
and 
\[
C_3(f)=\sum_{d\geq 1} f(d)\left( \frac{\log {d}}{d}-\frac{\log (d+1)}{d+1}
\right).
\]
Note that both sums converge since $\alpha < 1$. Huxley's bound \eqref{eq: Bourgain and Watt's result} then gives
\[
T_f(x)=C_1(f)x\log {x}+C_2(f) x+O\pth{x^{\frac{416 + 285\alpha}{701} + \ep}}.
\]
It is known that $\Delta(x) \gg x^{1/4}$ for infinitely many values of $x$, and it is conjectured that $\Delta(x) \ll x^{1/4+\ep}$ for all $x$. Taking $\theta = \frac{1}{4}$ in \eqref{eq:Asymptotic}, one conjectures that
\[
T_f(x)=C_1(f)x\log {x}+C_2(f) x+O\pth{x^{\frac{4 + 3\alpha}{7} + \ep}}.
\]
Using an estimate for three-dimensional exponential sums due to Robert and Sargos \cite{Robert}, we are able to prove this result unconditionally.

\begin{theorem}\label{theorem:1}
Let $f(n)$ be an arithmetic function with $f(n) \ll n^\alpha$ for some $\alpha\in[0,1)$, and let $C_1(f),C_2(f)$ be as above. For any $x\geq 2$ and $\varepsilon >0$, we have 
\begin{equation}\label{eq: asymptotic for T(x)}
T_f(x)=C_1(f)x\log {x}+C_2(f) x+O\pth{x^{\frac{4 + 3\alpha}{7} + \ep}}.
\end{equation}
\end{theorem}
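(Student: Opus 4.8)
The plan is to start from the hyperbola-method decomposition already used to establish \eqref{eq:Asymptotic}, but to handle the off-diagonal exponential sums with the Robert--Sargos three-dimensional estimate rather than the classical van der Corput theory that yields the exponent $\theta$ in \eqref{eq:Asymptotic}. First I would write $f(\lfloor x/n\rfloor)$ using the standard device $\lfloor x/n\rfloor = \sum_{m \le x/n} 1$ combined with a weighted version, or more directly expand $T_f(x) = \sum_{d\ge 1} f(d)\, \#\{n : \lfloor x/n\rfloor = d\}$ and note $\#\{n : \lfloor x/n\rfloor = d\} = \lfloor x/d\rfloor - \lfloor x/(d+1)\rfloor$, but with the extra $\tau(n)$ weight this becomes $\sum_{d} f(d) \big( D(x/d) - D(x/(d+1)) \big)$ where $D(t) = \sum_{n\le t}\tau(n)$. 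Splitting $D(t) = t\log t + (2\gamma-1)t + \Delta(t)$, the main terms produce $C_1(f)x\log x + C_2(f)x$ after summation by parts (this is exactly the computation sketched for \eqref{eq:Asymptotic}), and the task reduces to bounding
\[
E(x) = \sum_{d \le x^{1-\delta}} f(d)\,\big(\Delta(x/d) - \Delta(x/(d+1))\big) + (\text{tail } d > x^{1-\delta}),
\]
for a suitable cutoff, where the tail is controlled trivially using $f(d)\ll d^\alpha$ and $\Delta(t)\ll t^{1/2}$.

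The heart of the argument is to open $\Delta(t)$ not via a single Voronoi-type expansion but by returning to the two-variable divisor problem and exploiting that $\Delta(x/d)$ and $\Delta(x/(d+1))$ differ by a main term of size $\approx x/d^2$, so that the difference can be written as an exponential sum in \emph{three} variables $d, k, \ell$ (the variable $d$ from the outer sum and the two divisor variables from $\tau$). Concretely, after a Farey dissection / truncated Voronoi summation for $\Delta(x/d)$ one gets sums of the shape
\[
\sum_{d \asymp D} f(d)\, d^{-1/2} \sum_{n \asymp N} \frac{\tau(n)}{n^{1/2}} e\!\left(2\sqrt{\frac{xn}{d}}\right),
\]
and the three-dimensional structure appears by writing $n = k\ell$ so the phase is $2\sqrt{x k\ell/d}$ in the three variables $(d,k,\ell)$. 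The Robert--Sargos estimate \cite{Robert} for $\sum e(T \, g(d,k,\ell))$ with $g$ of this multiplicative square-root type then gives a saving over the trivial bound; I would apply it on dyadic ranges $D, K, L$ with $KL \asymp N$ and $N \asymp x/D$ up to the truncation point, then optimize $D$ (and the truncation length) against $\alpha$. Balancing the resulting $D$-dependent bound against the tail contribution $x^{(1+\alpha)/2}\cdot x^{-\delta(\cdots)}$ is what should produce the exponent $\frac{4+3\alpha}{7}$, matching what the conjectural $\theta = 1/4$ would give in \eqref{eq:Asymptotic}.

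The main obstacle I anticipate is bookkeeping the interaction between the weight $f(d)\ll d^\alpha$ and the exponential-sum estimate: because $f$ is only controlled by an upper bound and has no oscillation or multiplicativity we can exploit, we must treat $|f(d)| \le d^\alpha$ crudely, and the $d$-variable in the Robert--Sargos sum has to be handled as one of the three "free" variables whose length $D$ we are free to choose. Getting the exponent exactly right requires that the Robert--Sargos bound be strong enough in the $D$ aspect after inserting the factor $D^\alpha$, and that the Voronoi truncation error and the large-$d$ tail both come in below $x^{(4+3\alpha)/7+\ep}$; checking that all three of these contributions simultaneously meet the target at the optimal choice of $D$ is the delicate part. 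A secondary technical point is ensuring the main-term extraction (the summation by parts against $D(x/d)-D(x/(d+1))$) reproduces precisely $C_1(f)x\log x + C_2(f)x$ with the stated constants $C_1, C_2, C_3$; this is routine but must be done carefully since it is where the $\log x$ term and the constant $2\gamma-1$ enter.
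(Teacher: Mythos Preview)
Your high-level architecture matches the paper's: rewrite $T_f(x)=\sum_d f(d)\bigl(D(x/d)-D(x/(d+1))\bigr)$, extract the main term $C_1(f)x\log x+C_2(f)x$, and reduce everything to bounding $\sum_{d\sim D} d^\alpha\,|\Delta(x/d)|$ via a three-variable Robert--Sargos estimate, then optimize a cutoff. Where you diverge is in how you open $\Delta$. The paper does \emph{not} use Voronoi: it uses the elementary identity $\Delta(t)=-2\sum_{l\le\sqrt t}\psi(t/l)+O(1)$ and then Vaaler's truncated Fourier expansion of $\psi$, so the three variables feeding into Robert--Sargos are the outer variable $d$, the divisor variable $l$, and the Fourier frequency $h$, with monomial phase $xh/(dl)$. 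Your route---Voronoi for $\Delta$ and then $n=k\ell$---also produces a three-variable monomial phase $2\sqrt{xk\ell/d}$, and a dyadic calculation (assuming WLOG $K\le L$) recovers exactly the paper's bound $\sum_{d\sim D}|\Delta(x/d)|\ll (x^{3/8}D^{3/8}+x^{1/4}D^{3/4})x^\ep$. So both approaches land on the same estimate; the paper's is simply more elementary, avoiding the analytic overhead of truncated Voronoi.

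There is, however, a genuine gap in your setup of the tail. You write $E(x)=\sum_{d\le x^{1-\delta}}(\cdots)+(\text{tail }d>x^{1-\delta})$ and propose to control the tail ``trivially using $f(d)\ll d^\alpha$ and $\Delta(t)\ll t^{1/2}$''. That does not work: the sum $\sum_{x^{1-\delta}<d\le x} d^{\alpha}(x/d)^{1/2}$ is dominated by the upper endpoint $d\sim x$ whenever $\alpha>\tfrac12$, giving the useless bound $x^{1+\alpha}$; and even for smaller $\alpha$ (or using $\Delta(t)\ll t^{1/3}$) the resulting exponent exceeds $(4+3\alpha)/7$. The paper avoids this by splitting in the \emph{original} variable $n$ rather than in $d$: set $T_1(x)=\sum_{n\le N}f(\lfloor x/n\rfloor)\tau(n)\ll x^{\alpha+\ep}N^{1-\alpha}$, and only then pass to the $d$-parametrization for $n>N$, which forces $d\le x/N$ automatically. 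With this correction, choosing $N=x^{4/7}$ balances $x^{\alpha+\ep}N^{1-\alpha}$ against the Robert--Sargos contribution $x^{1+\alpha+\ep}N^{-3/4-\alpha}$ and yields the stated exponent.
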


\noindent\textbf{Notation:} The Landau-Vinogradov symbols $\ll, \gg, O$ have their usual meanings. We write $\lfloor x\rfloor$ to denote largest integer not exceeding $x$, $e(x)$ to denote $e^{2\pi ix}$, $\psi(t) = t - \floor{t} - \half$. As well, we write $d\sim D$ to denote $D<d \leq 2D$. The sum $\sumd_{D\leq x}$ means a sum over $D = 2^k$ with $2^k \leq x$.

\section{Preliminaries}

In this section, we state some lemmas that will be needed in the proof of Theorem \ref{theorem:1}. For the error term $\Delta(x)$ in  \eqref{eq: asymptotic formula for the sum of tau(n)}, we have the following expression.

\begin{lemma}
\cite[Theorem 4.5]{GK}\label{lemma:1} For any $x\in \mathbb{R}$, we have 
\begin{equation*}
\Delta(x)=-2\sum_{n\leq \sqrt{x}}\psi\left(\frac{x}{n}\right)+O(1).
\end{equation*}
\end{lemma}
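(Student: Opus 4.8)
The plan is to prove the identity by the Dirichlet hyperbola method, reducing the divisor sum to a single sum of floor functions, and then to extract $\Delta(x)$ by tracking the $\sqrt{x}$-order contributions precisely enough to witness their cancellation. First I would write $D(x) := \sum_{n\leq x}\tau(n) = \#\{(k,l)\in\ZN^2 : kl \leq x\}$ and split the lattice points beneath the hyperbola $kl=x$ according to whether $k\leq\sqrt x$ or $l\leq\sqrt x$. Points with both coordinates at most $\sqrt x$ automatically satisfy $kl\leq x$ and fill the square $\{1,\dots,\lfloor\sqrt x\rfloor\}^2$, so inclusion--exclusion yields the classical identity
\[
D(x) = 2\sum_{k\leq\sqrt x}\left\lfloor\frac{x}{k}\right\rfloor - \lfloor\sqrt x\rfloor^2.
\]

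Next I would insert $\lfloor t\rfloor = t - \tfrac12 - \psi(t)$ into both the sum and the square. The sum contributes $2x\sum_{k\leq\sqrt x}1/k$, a term $-\lfloor\sqrt x\rfloor$ from the halves, and the desired $-2\sum_{k\leq\sqrt x}\psi(x/k)$. Writing $N=\lfloor\sqrt x\rfloor$ and $\beta=\{\sqrt x\}\in[0,1)$, so that $\sqrt x = N+\beta$, the square expands as $\lfloor\sqrt x\rfloor^2 = x - 2\beta\sqrt x + O(1)$. The crucial step is to expand the harmonic sum to sufficient precision: combining $\sum_{k\leq N}1/k = \log N + \gamma + \tfrac{1}{2N} + O(N^{-2})$ with $\log N = \tfrac12\log x - \beta/\sqrt x + O(1/x)$ and $x/N = \sqrt x + \beta + O(x^{-1/2})$, one obtains
\[
2x\sum_{k\leq\sqrt x}\frac1k = x\log x + 2\gamma x - 2\beta\sqrt x + \sqrt x + O(1).
\]

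The main obstacle is precisely this delicate cancellation. Taken individually, each of $2x\sum 1/k$, $\lfloor\sqrt x\rfloor$, and $\lfloor\sqrt x\rfloor^2$ carries terms of size $\sqrt x$, including a $\beta$-dependent term of size $\beta\sqrt x$, yet the asserted error is only $O(1)$; one must therefore verify that these survive only as exact cancellations rather than as an accumulated $O(\sqrt x)$ loss. Collecting all contributions, the $-2\beta\sqrt x$ produced by $2x\log N$ cancels the $+2\beta\sqrt x$ coming from $-\lfloor\sqrt x\rfloor^2$, while the $+\sqrt x$ coming from $x/N$ (that is, from $2x\cdot\tfrac1{2N}$) cancels the $-\sqrt x$ in $-\lfloor\sqrt x\rfloor$. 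What remains is
\[
D(x) = x(\log x + 2\gamma - 1) - 2\sum_{k\leq\sqrt x}\psi\!\left(\frac{x}{k}\right) + O(1).
\]

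Comparing this with the defining relation $D(x) = x(\log x + 2\gamma - 1) + \Delta(x)$ from \eqref{eq: asymptotic formula for the sum of tau(n)} immediately gives $\Delta(x) = -2\sum_{k\leq\sqrt x}\psi(x/k) + O(1)$, as claimed. Thus the only quantitative input is the harmonic-sum asymptotic to order $N^{-1}$ with $O(N^{-2})$ error; everything else is exact algebra, and the verification that the $\sqrt x$-order terms vanish is the heart of the argument.
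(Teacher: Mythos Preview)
The paper does not supply its own proof of this lemma; it simply quotes it as Theorem~4.5 of Graham--Kolesnik, so there is no in-paper argument to compare against. Your proposal is a correct and standard derivation: the hyperbola identity $D(x)=2\sum_{k\le\sqrt x}\lfloor x/k\rfloor-\lfloor\sqrt x\rfloor^2$ is exact, the substitution $\lfloor t\rfloor=t-\tfrac12-\psi(t)$ is legitimate, and your tracking of the $\sqrt x$- and $\beta\sqrt x$-order terms via the Euler--Maclaurin expansion $\sum_{k\le N}1/k=\log N+\gamma+\tfrac1{2N}+O(N^{-2})$ correctly exhibits their cancellation, leaving only the $\psi$-sum and an $O(1)$ remainder. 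This is precisely the route taken in the cited reference.
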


The function $\psi(x)$ is periodic with period $1$ and so can be expanded into a Fourier series. We need the following truncated version due to Vaaler.

\begin{lemma}
\cite[Lemma 4.1]{BDHPS}\label{lemma:2} For $x$ be real and $H\geq 1$, we
have 
\begin{equation*}
\psi(x)=-\sum_{1\leq |h|\leq H}\Phi\left(\frac{h}{H+1}\right)\frac{e(hx)}{%
2\pi ih}+R_{H}(x),
\end{equation*}
where $\Phi(t):=\pi t(1-|t|)\cos(\pi t)+|t|$, and the error term $R_{H}(x)$
satisfies 
\begin{equation*}
\left|R_{H}(x)\right|\leq \frac{1}{2H+2}\sum_{|h|\leq H}\left(1-\frac{|h|}{%
H+1}\right)e(hx).
\end{equation*}
\end{lemma}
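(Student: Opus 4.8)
This is Vaaler's truncated Fourier approximation of the sawtooth $\psi$, and the plan is to deduce it from the Beurling--Selberg theory of extremal one-sided trigonometric approximation. Recall that $\psi$ is $1$-periodic with formal (divergent) Fourier series $-\sum_{h\ne0}e(hx)/(2\pi ih)$; the goal is to truncate this to degree $H$ with a controlled error. First I would construct two trigonometric polynomials $\psi^\pm_H$ of degree at most $H$ sandwiching $\psi$ pointwise, $\psi^-_H(x)\le\psi(x)\le\psi^+_H(x)$ for all real $x$, arranged so that their average $\psi_H=\tfrac12(\psi^+_H+\psi^-_H)$ reproduces the stated main term while their difference is a constant multiple of the Fej\'er kernel. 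Setting $R_H=\psi-\psi_H$, the sandwiching immediately gives $|R_H(x)|\le\tfrac12(\psi^+_H(x)-\psi^-_H(x))$, which will be exactly the claimed bound once the difference is identified.

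To build the pair I would start from Beurling's entire function $B(z)$ of exponential type $2\pi$ with $B(x)\ge\operatorname{sgn}(x)$ on $\mathbb R$, $\int_{\mathbb R}(B(x)-\operatorname{sgn}(x))\,dx=1$, and $\widehat{B-\operatorname{sgn}}$ supported in $[-1,1]$. Rescaling $B(z)$ and $-B(-z)$ to exponential type $2\pi(H+1)$ yields extremal majorant and minorant approximations whose Fourier transforms are supported in $[-(H+1),H+1]$; periodizing them by Poisson summation then produces $1$-periodic one-sided approximants $\psi^\pm_H$ to $\psi$ that are trigonometric polynomials of degree at most $H$ (the endpoint frequencies $|h|=H+1$ being removed by the normalization). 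A standard feature of this construction is that the majorant and minorant differ by exactly $\tfrac{1}{H+1}F_H$, where $F_H(x)=\sum_{|h|\le H}(1-\tfrac{|h|}{H+1})e(hx)\ge0$ is the Fej\'er kernel, and this gives precisely $|R_H|\le\tfrac{1}{2(H+1)}F_H$, the error estimate of the lemma.

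The remaining task, which I expect to be the crux, is to compute the Fourier coefficients of $\psi_H$ and verify that the $h$-th coefficient equals $-\Phi(\tfrac{h}{H+1})\tfrac{1}{2\pi ih}$ for $1\le|h|\le H$, with vanishing constant term. By Poisson summation these coefficients are the values at $h/(H+1)$ of the Fourier transform of the rescaled Beurling construction, so the computation reduces to the explicit transform of $B$. The Beurling function's transform supplies exactly the factor $\pi t(1-|t|)\cos(\pi t)$, while the $|t|$ term descends from $\operatorname{sgn}$, so the two combine into $\Phi(t)=\pi t(1-|t|)\cos(\pi t)+|t|$ evaluated at $t=h/(H+1)$. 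Tracking this transform accurately through the rescaling and periodization --- and checking that it collapses to the clean closed form $\Phi$ --- is the main obstacle; once the extremal pair and its coefficients are in hand, the sandwiching and the Fej\'er-kernel error bound are formal. Since the statement is quoted from \cite{BDHPS}, going back to Vaaler, one could instead simply cite it, but the above is the route I would follow to prove it from scratch.
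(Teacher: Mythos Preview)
The paper does not prove this lemma at all: it is simply quoted from \cite[Lemma 4.1]{BDHPS} and used as a black box. Your sketch via the Beurling--Selberg extremal pair, periodization, and identification of the difference with $\tfrac{1}{H+1}F_H$ is exactly the standard route (Vaaler's original argument) and is correct in outline, so you have gone well beyond what the paper itself does. The only substantive work, as you correctly flag, is the explicit transform computation that produces the closed form $\Phi(t)=\pi t(1-|t|)\cos(\pi t)+|t|$; this needs to be carried out carefully to match the precise normalization in \cite{BDHPS}, but there is no gap in your plan.
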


Lastly, we need the following lemma on certain three-dimensional exponential sums.

\begin{lemma}
\cite[Therom 3]{Robert}\label{lemma:3} Let 
\begin{equation*}
S=\sum_{h=H+1}^{2H}\sum_{n=N+1}^{2N}\left|\sum_{M<m\leq 2M}e\left(X\frac{%
m^{\alpha}h^{\beta}n^{\gamma}}{M^{\alpha}H^{\beta}N^{\gamma}}%
\right)\right|^{*}
\end{equation*}
where $H$, $N$ and $M$ are positive integers, $X \geq 1$ is a real number, $\alpha$, $\beta$ and $\gamma$ are fixed real number such that $\alpha(\alpha-1)\beta\gamma\neq0$, and 
\begin{equation*}
\left|\sum_{1\leq n\leq N}z_n\right|^{*}=\max_{1\leq N_1\leq N_2\leq
N}\left|\sum_{n=N_1}^{N_2}z_n\right|.
\end{equation*}
Then we have 
\begin{equation*}
S\ll \left(HNM\right)^{1+\varepsilon}\left\{\left(\frac{X}{HNM^2}%
\right)^{1/4}+\frac{1}{M^{1/2}}+\frac{1}{X}\right\}.
\end{equation*}
\end{lemma}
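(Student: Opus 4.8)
This estimate is due to Robert and Sargos and is quoted rather than reproved; what follows is the shape such a proof takes. Throughout write $\Psi(h,n)=(h/H)^{\beta}(n/N)^{\gamma}$, so the inner phase is $X(m/M)^{\alpha}\Psi(h,n)$, and note $\Psi(h,n)\asymp 1$ on the ranges $h\sim H$, $n\sim N$. First I would dispose of the maximum hidden in $|\cdot|^{*}$: the maximum of a partial sum over a subinterval $M<N_1\le N_2\le 2M$ can be recovered from the complete sum by a standard completion (Fourier/Fej\'er) argument at the cost of a factor $(\log M)^{O(1)}$, which is absorbed into $(HNM)^{\varepsilon}$. It then suffices to bound $S$ with the complete sum over $M<m\le 2M$. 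Next I would apply Cauchy--Schwarz to the outer sum over the $HN$ pairs $(h,n)$, giving
\[
S \ll (HN)^{1/2}\Bigl(\sum_{h\sim H}\,\sum_{n\sim N}\Bigl|\sum_{M<m\le 2M} e\bigl(X(m/M)^{\alpha}\Psi(h,n)\bigr)\Bigr|^{2}\Bigr)^{1/2},
\]
and open the square, introducing a second variable $m'$ and the phase $X\Psi(h,n)\bigl((m/M)^{\alpha}-(m'/M)^{\alpha}\bigr)$.

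Writing $c=c(m,m')=X\bigl((m/M)^{\alpha}-(m'/M)^{\alpha}\bigr)$, the hypothesis $\alpha(\alpha-1)\ne0$ makes $(m/M)^{\alpha}$ genuinely nonlinear, so $|c|\asymp X|m-m'|/M$, and the mean square becomes $\sum_{m,m'}\sum_{h,n}e\bigl(c\,\Psi(h,n)\bigr)$. The diagonal pairs $m=m'$ carry no cancellation and contribute $\ll (HN)\cdot M$, which after the outer square root is exactly the source of the $M^{-1/2}$ term; the pairs where the inner $m$-sum has a large first derivative, handled by the first-derivative (Kusmin--Landau) test, feed the $X^{-1}$ term. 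For the remaining off-diagonal pairs, with $|c|\gg1$, I would estimate the two-dimensional monomial sum $\sum_{h,n}e\bigl(c\,\Psi(h,n)\bigr)$.

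This two-dimensional estimate is the crux and the step I expect to be the main obstacle. Summing over $n$ first and invoking the first-derivative test (legitimate because $\gamma\ne0$ forces the phase to be monotone in $n$) reduces the remaining $h$-sum to counting how often $c\,(h/H)^{\beta}$ lies within a prescribed distance of the lattice $N\ZZ$; symmetrizing in $h$, this is governed by the number of near-solutions of
\[
\bigl|h_1^{\beta}n_1^{\gamma}-h_2^{\beta}n_2^{\gamma}\bigr|\le \delta\,H^{\beta}N^{\gamma},\qquad h_i\sim H,\ n_i\sim N.
\]
The non-vanishing of $\beta\gamma$ guarantees that $h^{\beta}n^{\gamma}$ is genuinely varying (monotone in each variable), so these counts obey square-root-type bounds rather than the trivial one. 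This is precisely Robert and Sargos's counting machinery, and essentially all the arithmetic content of the theorem sits here; getting a bound uniform in the parameters $H,N,\delta$ is the delicate part.

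Finally I would insert the counting bound, sum the resulting estimates over the shift $m-m'$ (equivalently, optimize the length $R$ of a van der Corput differencing step), and combine with the diagonal and first-derivative contributions. Balancing the two competing terms and taking the square root inherited from Cauchy--Schwarz produces the factor $(X/HNM^{2})^{1/4}$ alongside the $M^{-1/2}$ and $X^{-1}$ pieces, yielding $S\ll (HNM)^{1+\varepsilon}\bigl\{(X/HNM^{2})^{1/4}+M^{-1/2}+X^{-1}\bigr\}$ as claimed. The conditions $\alpha(\alpha-1)\ne0$ and $\beta\gamma\ne0$ are used exactly as flagged above: the former to keep the $m$-phase curved after differencing, the latter to keep the monomial in $(h,n)$ monotone so the counting is favourable.
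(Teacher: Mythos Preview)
The paper provides no proof of this lemma: it is stated with a citation to \cite[Theorem~3]{Robert} and used as a black box. You correctly identify this at the outset, so in that sense your proposal matches the paper exactly.

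Your sketch of the Robert--Sargos argument is broadly faithful in spirit: complete the inner sum to remove the maximum, apply Cauchy--Schwarz over $(h,n)$, open the square in $m$, and reduce the off-diagonal to a spacing problem counting near-coincidences $|h_1^{\beta}n_1^{\gamma}-h_2^{\beta}n_2^{\gamma}|\le \delta H^{\beta}N^{\gamma}$, which is their key technical input. A couple of inaccuracies are worth flagging. First, the assertion that $|c|\asymp X|m-m'|/M$ follows already from $\alpha\ne 0$; the condition $\alpha-1\ne 0$ is not what controls the size of the first difference but rather the nonvanishing of the second derivative needed when one applies van der Corput's inequality (Weyl differencing) to the $m$-sum. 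Second, the actual Robert--Sargos proof does not proceed by Cauchy--Schwarz over $(h,n)$ followed by a first-derivative treatment of the resulting two-dimensional sum; instead they apply Weyl's inequality to the $m$-sum (this is where $\alpha(\alpha-1)\ne 0$ enters), then the double large sieve, and finally their counting lemma for the quadruples. Your outline conflates these steps somewhat, and the route you describe would not obviously produce the exponent $1/4$ without the double large sieve or an equivalent device. None of this affects the present paper, which only quotes the result.
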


As a consequence of the above three lemmas, we deduce the following estimate for the average of $\Delta$ over a sequence of monomials.

\begin{proposition}\label{prop:DeltaBound}
For $X, D \geq 1$ and $\alpha\neq 0$, we have
\[
\sum_{d\sim D} \abs{\Delta\pth{X \frac{d^\alpha}{D^\alpha}}} \ll \pth{X^{3/8}D^{3/4}  +X^{1/4}D }(XD)^\ep,
\]
where the implied constant depends only on $\alpha$ and $\ep$.
\end{proposition}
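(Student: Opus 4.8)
The plan is to combine Lemma \ref{lemma:1} (to express each $\Delta$ as a sum of $\psi$-values), Lemma \ref{lemma:2} (to expand $\psi$ into a short exponential sum), and Lemma \ref{lemma:3} (to bound the resulting three-dimensional exponential sum). Writing $X_d = X d^\alpha/D^\alpha$, so that $X_d \asymp X$ for $d\sim D$, Lemma \ref{lemma:1} gives
\[
\sum_{d\sim D}\abs{\Delta(X_d)} \ll \sum_{d\sim D}\abs{\sum_{n\leq\sqrt{X_d}}\psi\pth{\frac{X_d}{n}}} + D.
\]
The inner sum over $n$ runs up to $\sqrt{X_d}\asymp \sqrt X$; I would split this range dyadically into $n\sim N$ with $N \ll \sqrt X$, keeping in mind the minor technical point that the cutoff $\sqrt{X_d}$ depends on $d$ (this is handled by the usual trick of inserting the condition $n^2 \leq X_d$ via a short completion or by treating the at most $O(\log X)$ values of $n$ near the cutoff trivially, each contributing $O(D)$ after summing over $d$). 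For each dyadic block, apply Lemma \ref{lemma:2} with a parameter $H$ (to be chosen, roughly $H \asymp X/N^2$ so that the main term in $R_H$ is under control, but I will optimize at the end) to replace $\psi(X_d/n)$ by $-\sum_{1\leq|h|\leq H}\Phi(h/(H+1))\frac{e(hX_d/n)}{2\pi i h} + R_H(X_d/n)$.

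The main term then becomes, after inserting absolute values over $d$ and summing $\Phi(h/(H+1)) \ll 1$ against the $1/h$ weight,
\[
\sum_{N}\;\sum_{1\leq|h|\leq H}\frac{1}{h}\;\abs{\sum_{d\sim D} e\pth{\frac{X h d^\alpha}{D^\alpha n}}}^{*}\quad\text{(roughly)},
\]
but the right object is genuinely three-dimensional: the phase is $X h d^\alpha /(D^\alpha n)$ with $h\sim H$, $n\sim N$, $d\sim D$, which is a monomial in three variables with exponents $(1,\alpha,-1)$ satisfying $\alpha(\alpha-1)\cdot 1 \cdot(-1)\neq 0$ since $\alpha\notin\{0,1\}$ — wait, here $\alpha$ is the exponent from the \emph{monomial} $d^\alpha$ in the Proposition, which is an arbitrary nonzero real, so I should be careful: Lemma \ref{lemma:3}'s hypothesis $\alpha(\alpha-1)\beta\gamma\neq 0$ must be checked with the roles assigned so that the variable playing the role of ``$m$'' has exponent $\notin\{0,1\}$. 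I would assign $d$ (exponent $\alpha$) the role of the variable $h$ or $n$ in Lemma \ref{lemma:3} and let $n$ (exponent $-1$) or $h$ (exponent $1$) play the role of ``$m$'' — but ``$m$'' needs exponent $\neq 0,1$, so actually $d$ with generic exponent $\alpha$ should be ``$m$'', unless $\alpha\in\{0,1\}$; since the Proposition only excludes $\alpha=0$, I must separately handle $\alpha=1$ (where the sum is essentially $\Delta$ of a linear progression and can be done directly, or one notes $d^1/D^1 \cdot X$ summed is classical) — in general for $\alpha\neq 0,1$ put $d\leftrightarrow m$, $h\leftrightarrow h$, $n\leftrightarrow n$, with $X_{\text{Lem}} \asymp XH/N$ the size of the phase, $M\asymp D$, and the exponents $\beta = 1$ on $h$, $\gamma = -1$ on $n$, all nonzero. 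Lemma \ref{lemma:3} then yields
\[
\abs{\text{3D sum}} \ll (HND)^{1+\ep}\pth{\pth{\frac{XH/N}{HND^2}}^{1/4} + D^{-1/2} + \frac{N}{XH}} = (HND)^\ep\pth{(HN D)^{3/4}\pth{\frac{X}{N^2 D}}^{1/4}+\ \cdots}.
\]

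Summing the resulting bound over the $O(\log H)$ dyadic $h$-ranges (the $1/h$ weight makes this harmless), over the $O(\log X)$ dyadic $N$-ranges with $N\ll\sqrt X$, and using $H$ of size about $X/N^2$ in the main term while the error $R_H$ contributes (via its own completion, again through Lemma \ref{lemma:2}'s bound on $|R_H|$) a term that, when summed over $d\sim D$, is dominated by the same expressions — one is led after routine optimization to the two terms $X^{3/8}D^{3/4}$ (from the first, ``$(X/HNM^2)^{1/4}$'' term of Lemma \ref{lemma:3}, which is the genuinely new three-dimensional input) and $X^{1/4}D$ (from the $M^{-1/2}$ term, equivalently the $N=1$ / small-$N$ regime, matching the classical $\Delta(x)\ll x^{1/3}$ heuristic localized on the progression); the $1/X$ term in Lemma \ref{lemma:3} is always negligible. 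The main obstacle I anticipate is the bookkeeping around the $d$-dependent summation cutoff $n\leq\sqrt{X_d}$ and making sure the three-dimensional sum is assembled in exactly the shape of Lemma \ref{lemma:3} — in particular verifying the non-degeneracy hypothesis uniformly in $\alpha$ (handling $\alpha=1$, and any $\alpha$ with $\alpha-1=0$, as a separate elementary case) and tracking the size parameter $X_{\text{Lem}}$ of the phase through the dyadic decomposition so that the exponent optimization cleanly produces $\max(X^{3/8}D^{3/4}, X^{1/4}D)$ rather than something weaker; once the sum is correctly set up, the optimization over $H$ and the dyadic summation over $N$ are routine.
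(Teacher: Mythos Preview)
Your overall plan (Lemma~\ref{lemma:1} $\to$ Lemma~\ref{lemma:2} $\to$ Lemma~\ref{lemma:3}) is the paper's plan, but the assignment of variables in Lemma~\ref{lemma:3} is wrong, and this is not merely cosmetic. You propose $d\leftrightarrow m$, i.e.\ placing the $d$-sum \emph{inside} the absolute value $|\cdot|^{*}$. That is not available: the quantity to bound is $\sum_{d\sim D}|\Delta(X_d)|$, so the $d$-sum sits \emph{outside} an absolute value from the start and cannot be moved inside by any inequality going in the right direction. After expanding $\Delta$ via Lemma~\ref{lemma:1} and pulling the $h$-sum out by the triangle inequality, what remains inside $|\cdot|$ is the Dirichlet variable $l$ (your ``$n$''), with the $d$-dependent cutoff $l\le \sqrt{X_d}$ absorbed precisely by the $|\cdot|^{*}$; this is how the paper handles the bookkeeping you worried about. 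Thus the only legitimate assignment is $l\leftrightarrow m$ (exponent $-1$), $h\leftrightarrow h$ (exponent $1$), $d\leftrightarrow n$ (exponent $\alpha$). The non-degeneracy condition of Lemma~\ref{lemma:3} then reads $(-1)(-2)\cdot 1\cdot\alpha\neq 0$, i.e.\ just $\alpha\neq 0$, so your separate treatment of $\alpha=1$ is unnecessary.

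The mis-assignment is also quantitatively fatal. With $l\leftrightarrow m$ (so $M=L$, $X_{\mathrm{Lem}}\asymp XH'/L$), Lemma~\ref{lemma:3} divided by $H'$ gives $DL\bigl((X/(DL^{3}))^{1/4}+L^{-1/2}+L/(XH')\bigr)$, and summing over dyadic $L\le\sqrt{X}$ produces exactly $X^{3/8}D^{3/4}+X^{1/4}D$. By contrast, with $d\leftrightarrow m$ (so $M=D$) the same computation gives, after dividing by $H'$, the terms $X^{1/4}N^{1/2}D^{1/2}+ND^{1/2}+N^{2}D/(XH')$; summing over $N\le\sqrt{X}$ yields only $X^{1/2}D^{1/2}$, which is strictly weaker than the Proposition for $D<\sqrt{X}$ and does \emph{not} give the stated bound. (Your displayed rewriting ``$(HND)^{3/4}(X/(N^{2}D))^{1/4}$'' contains an algebra slip that hides this.) So the step that would fail is exactly the ``routine optimization'' at the end: it cannot recover $X^{3/8}D^{3/4}$ from the $d$-inside setup.
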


We note that this agrees with the estimate one obtains from the conjectured bound $\Delta(x) \ll x^{1/4+\ep}$ so long as $D \geq \sqrt{X}$.

\begin{proof}
Let $H = 2^k$ be such that $2^k \leq \sqrt{X} < 2^{k+1}$. By Lemmas \ref{lemma:1} and \ref{lemma:2}, we have that for certain coefficients $c_h$ with $\abs{c_h} \ll \frac{1}{h}$, 
\[
\begin{aligned}
\abs{\Delta\pth{X \frac{d^\alpha}{D^\alpha}}} &\ll 1 + \sumabs{\sum_{l\leq \sqrt{X \frac{d^\alpha}{D^\alpha}}} \sumpth{\frac{1}{H} + \sum_{h\leq H} c_h e\pth{X\frac{hd^\alpha}{l D^\alpha}}}} \\
&\ll 1 + \sum_{h\leq H} \frac{1}{h} \sumabs{\sum_{l\leq \sqrt{X \frac{d^\alpha}{D^\alpha}}} e\pth{X\frac{hd^\alpha}{l D^\alpha}}}.
\end{aligned}
\]
We divide the sums over $l$ and $h$ into dyadic sums with $l\sim L$ and $h \sim H'$, where $L$ and $H'$ are powers of 2. Then the sum on the last line is
\[
\begin{aligned}
&\ll \sumd_{H' \leq H} \frac{1}{H'} \sumd_{L\leq \sqrt{2^\alpha X}} \sum_{h\sim H'} \sumabs{\sum_{\substack{l\sim L \\ l \leq \sqrt{X \frac{d^\alpha}{D^\alpha}}}} e\pth{X\frac{hd^\alpha}{l D^\alpha}}}\\
&\ll \sumd_{H' \leq H} \frac{1}{H'} \sumd_{L\leq \sqrt{2^\alpha X}} \sum_{h\sim H'} \sumabs{\sum_{\l\sim L} e\pth{X\frac{hd^\alpha}{l D^\alpha}}}^*
\end{aligned}
\]
by the definition of $\abs{\cdot}^*$. Thus
\begin{equation}\label{eq:DeltaBoundDyadic}
\sum_{d\sim D} \abs{\Delta\pth{X \frac{d^\alpha}{D^\alpha}}} \ll D+ \sumd_{H' \leq H} \sumd_{L\leq \sqrt{2^\alpha X}}  \sumpth{\frac{1}{H'}\sum_{h\sim H'} \sum_{d\sim D} \sumabs{\sum_{\l\sim L} e\pth{X\frac{hd^\alpha}{l D^\alpha}}}^*}
\end{equation}
By Lemma \ref{lemma:3}, we have
\[
\begin{aligned}
\frac{1}{H'}\sum_{h\sim H'} \sum_{d\sim D} \sumabs{\sum_{\l\sim L} e\pth{X\frac{hd^\alpha}{l D^\alpha}}}^* &\ll DL \pth{\fracp{X}{D L^3}^{1/4} + \frac{1}{L^{1/2}} + \frac{L}{XH'}}(XD)^\ep \\
&\ll \pth{X^{1/4} D^{3/4} L^{1/4} + D L^{1/2} + \frac{L^2D}{X}} (XD)^\ep.
\end{aligned}
\]
Combining this with \eqref{eq:DeltaBoundDyadic}, we find that
\[
\begin{aligned}
\sum_{d\sim D} \abs{\Delta\pth{X \frac{d^\alpha}{D^\alpha}}} &\ll D + \sumd_{H' \leq H} \sumd_{L\leq \sqrt{2^\alpha X}} \pth{X^{1/4} D^{3/4} L^{1/4} + D L^{1/2} + \frac{L^2D}{X}} (XD)^\ep  \\
&\ll \pth{X^{3/8} D^{3/4}  +X^{1/4}D }(XD)^\ep.
\end{aligned}
\]
\end{proof}

From this, we deduce the following special case needed for the proof of Theorem \ref{theorem:1}.

\begin{proposition}\label{prop:NeededDelta}
For $x \geq 1$, $D \leq x$, and $\delta \in \set{0,1}$, we have
\[
\sum_{d\sim D} \abs{\Delta\fracp{x}{d+\delta}} \ll \pth{x^{3/8} D^{3/8} + x^{1/4} D^{3/4} }x^\ep,
\]
where the implied constant depends only on $\ep$ and $\delta$.
\end{proposition}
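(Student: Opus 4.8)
The plan is to deduce this directly from Proposition \ref{prop:DeltaBound} by a change of parameters, treating the two values of $\delta$ separately. The key observation is that $x/d$ is the monomial $X\,d^{\alpha}/D^{\alpha}$ with $\alpha = -1$ and $X = x/D$, and that the shifted case $\delta = 1$ reduces to the case $\delta = 0$ after the substitution $n = d+1$, at the cost of a single boundary term.

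For $\delta = 0$ I would apply Proposition \ref{prop:DeltaBound} with $\alpha = -1$ and $X = x/D$. Since $D \le x$ we have $X \ge 1$ as required, and $X\,d^{-1}/D^{-1} = XD/d = x/d$. Because $XD = x$, the proposition gives
\[
\sum_{d\sim D}\abs{\Delta\pth{\frac{x}{d}}} \ll \pth{X^{3/8}D^{3/4} + X^{1/4}D}(XD)^{\ep} = \pth{x^{3/8}D^{3/8} + x^{1/4}D^{3/4}}x^{\ep},
\]
which is exactly the claim for $\delta = 0$.

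For $\delta = 1$, after substituting $n = d+1$ the sum becomes $\sum_{D+1 < n \le 2D+1}\abs{\Delta(x/n)}$. Since the interval $(2D,2D+1]$ contains exactly one integer, say $n_0$, this is at most $\sum_{D < n \le 2D}\abs{\Delta(x/n)} + \abs{\Delta(x/n_0)}$. The first sum is covered by the $\delta = 0$ case just established. For the second term, $x/n_0 < x/D \le x$, so Huxley's bound \eqref{eq: Bourgain and Watt's result} (or even the classical bound $\Delta(t)\ll t^{1/3}$) gives $\abs{\Delta(x/n_0)} \ll (x/D)^{131/416+\ep} \le x^{131/416}x^{\ep} \le x^{3/8}D^{3/8}x^{\ep}$, using $D \ge 1$ and $131/416 < 3/8$. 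This completes the argument.

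There is no substantial obstacle here: essentially all the content sits in Proposition \ref{prop:DeltaBound}, and the only step requiring any attention is the bookkeeping for the shift $\delta = 1$, where one must check that the extra integer picked up at the top of the dyadic range is absorbed into the stated error — which it is, by any subconvex bound for $\Delta$.
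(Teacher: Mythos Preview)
Your proof is correct and follows essentially the same approach as the paper: for $\delta=0$ you apply Proposition~\ref{prop:DeltaBound} with $\alpha=-1$ and $X=x/D$, exactly as the paper does. For $\delta=1$ the paper instead bounds the shifted range by the two adjacent dyadic sums $\sum_{D/2<d\le D}+\sum_{D<d\le 2D}$ and applies Proposition~\ref{prop:DeltaBound} twice, thereby avoiding any appeal to a pointwise bound for $\Delta$; your treatment of the single boundary term via Huxley's estimate is a harmless variant.
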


\begin{proof}
If $\delta = 0$, we apply Proposition \ref{prop:DeltaBound} with $\alpha = -1$ and $X = \frac{x}{D}$. If $\delta = 1$, then by positivity, we have
\[
\sum_{d\sim D} \abs{\Delta\fracp{x}{d+\delta}}  = \sum_{D - 1 < d \leq 2D-1} \abs{\Delta\fracp{x}{d}} \leq \sumpth{\sum_{D/2 < d \leq D} + \sum_{D < d \leq 2D}} \abs{\Delta\fracp{x}{d}},
\]
and the required bound follows by applying Proposition \ref{prop:DeltaBound} to each of the two sums over $d$ with $X = \frac{x}{D}$ and $X = \frac{2x}{D}$.

\end{proof}

\section{Proof of Theorem \ref{theorem:1}}\label{sec: Proof of (1.4)}

Let $N\in [1,x)$ be a parameter that will be chosen later. We split $T_f(x)$ into two parts: 
\begin{equation}\label{equation:S}
T_f(x)=T_1(x)+T_2(x),
\end{equation}
where
\[
T_1(x):=\sum_{n\leq N}f\left(\left\lfloor\frac{x}{n}\right\rfloor\right)%
\tau(n),\qquad T_2(x):=\sum_{N<n\leq x}f\left(\left\lfloor\frac{x}{n}%
\right\rfloor\right)\tau(n).
\]
Since $\tau(n)\ll n^{\varepsilon}$ for any $\ep > 0$, we have

\begin{equation}\label{equation:S1}
T_1(x) \ll \sum_{n\leq N} \frac{x^{\alpha+\ep}}{n^\alpha} \ll x^{\alpha+\ep} N^{1-\alpha}.
\end{equation}
If $d=\lfloor x/n\rfloor$, then $x/n-1< d \leq x/n$ and $x/(d+1)<n \leq x/d$, so
\[
T_2(x) =\sum_{N<n\leq x}\sum_{\left\lfloor\frac{x}{n}\right\rfloor=d}f(d)%
\tau(n) =\sum_{d\leq \frac{x}{N}}f(d)\sum_{\frac{x}{d+1}<n\leq \frac{x}{d}%
}\tau(n).
\]
By \eqref{eq: asymptotic formula for the sum of tau(n)}, we
obtain
\begin{equation}
\label{equation:S2}
T_2(x)=\sum_{d\leq \frac{x}{N}}f(d)\left(\sum_{n\leq \frac{x}{d}%
}\tau(n)-\sum_{n\leq \frac{x}{d+1}}\tau(n)\right)=T_{21}(x)-T_{22}(x)+T_{\Delta}(x),
\end{equation}
where
\[
\begin{aligned}
T_{21}(x)&:=x(\log{x}+2\gamma-1)\sum_{d\leq \frac{x}{N}}\frac{f(d)}{d(d+1)}, \\
T_{22}(x)&:=x\sum_{d\leq \frac{x}{N}}f(d)\left(\frac{\log{d}}{d}-\frac{\log{(d+1)}}{d+1}\right), \\
T_{\Delta}(x)&:=\sum_{d\leq \frac{x}{N}}f(d)\left(\Delta\left(\frac{x}{d}\right)-\Delta\left(\frac{x}{d+1}\right)\right).
\end{aligned}
\]
Since $f(d) \ll d^\alpha$ with $\alpha < 1$, we have
\[
\sum_{d> \frac{x}{N}}\frac{f(d)}{d(d+1)} \ll \sum_{d> \frac{x}{N}} d^{\alpha-2} \ll \fracp{x}{N}^{\alpha-1},
\]
and similarly
\[
\sum_{d> \frac{x}{N}} f(d)\left(\frac{\log{d}}{d}-\frac{\log{(d+1)}}{d+1}\right) \ll \fracp{x}{N}^{\alpha-1}\log x.
\]
Therefore
\[
T_{21}(x) - T_{22}(x) = C_1(f)x\log x + C_2(f)x + O\pth{x^{\alpha+\ep}N^{1-\alpha}},
\]
and so
\begin{equation}\label{eq:T expansion}
T_f(x) = C_1(f)x\log x + C_2(f)x + O\pth{x^{\alpha+\ep}N^{1-\alpha}} + T_\Delta(x).
\end{equation}

To prove \eqref{eq:Asymptotic}, we note that the estimate $\Delta(x) \ll x^{\theta+\ep}$ immediately gives
\[
T_\Delta(x) \ll \sum_{d\leq \frac{x}{N}} d^{\alpha} \fracp{x}{d}^{\theta+\ep} \ll x^{1+\alpha+\ep} N^{\theta-1-\alpha},
\]
and \eqref{eq:Asymptotic} follows by choosing $N = x^{\frac{1}{2-\theta}}$.

Finally, to prove Theorem \ref{theorem:1}, we note that
\[
\abs{T_\Delta(x)} \ll \sumd_{D\leq \frac{x}{N}} D^\alpha \sum_{d\sim D} \sumpth{\abs{\Delta\fracp{x}{d}} + \abs{\Delta\fracp{x}{d+1}}},
\]
so that Proposition \ref{prop:NeededDelta} gives
\[
\begin{aligned}
\abs{T_\Delta(x)} &\ll \sumd_{D\leq \frac{x}{N}} D^\alpha\pth{x^{3/8}D^{3/8} + x^{1/4} D^{3/4}}x^\ep \\
&\ll x^{3/4+\alpha+\ep} N^{-3/8-\alpha} + x^{1+\alpha+\ep} N^{-3/4-\alpha}.
\end{aligned}
\]
The second term dominates so long as $N \leq x^{2/3}$. In this case, combining the above estimate with \eqref{eq:T expansion}, we have
\[
T_f(x) = C_1(f)x\log x + C_2(f)x + O\pth{x^{\alpha+\ep}N^{1-\alpha} + x^{1+\alpha+\ep} N^{-3/4-\alpha}}.
\]
 Choosing $N = x^{4/7}$, we complete the proof of Theorem \ref{theorem:1}.\\

\section{Acknowledgements}
The authors would like to thank Professor Kui Liu for his suggestions to improve this paper.\\

\end{document}